\documentclass[11pt]{amsart}
\usepackage{hyperref}
\usepackage{anysize} \marginsize{1.3in}{1.3in}{1in}{1in}
\usepackage{amsfonts}

\usepackage{comment}
\usepackage[all,cmtip]{xy}
\usepackage{amsmath}
\usepackage{amsthm}
\usepackage{amssymb}
\usepackage{mathrsfs}
\usepackage{enumitem}
\usepackage{tikz-cd}
\usepackage{tikz}
\synctex=1

\usepackage[normalem]{ulem}

\newtheorem{thm}{Theorem}

\newtheorem{prop}[thm]{Proposition}
\newtheorem{lemma}[thm]{Lemma}

\newtheorem{cor}[thm]{Corollary}

\theoremstyle{definition}

\newtheorem*{definition*}         {Definition}

\theoremstyle{remark}
\newtheorem{eg}[thm]{Example}
\newtheorem{remark}[thm]{Remark}

%%%%%%%%%%%%%%%%%%%%%%%%%%%%%%%%%

\newcommand{\Hom}{\mathrm{Hom}}

\newcommand{\Z}{\mathbb{Z}}

\newcommand{\C}{\mathbb{C}}

\def\reg{\mathrm{reg}}

\newcommand{\rk}{\mathrm{rk}\,}

%-----BEN STUFF-------
\renewcommand{\phi}{\varphi}

\def\SO{\operatorname{SO}}

\def\Hom{\operatorname{Hom}}

\def\pt{\operatorname{pt}}

\def\rk{\operatorname{rk}}

\def\Pic{\operatorname{Pic}}

\def\gr{\operatorname{gr}}

\newcommand{\an}{\mathrm{an}}

\renewcommand{\bar}[1]{\overline{#1}}

%
%Colour macros introduced here!
%
\usepackage{amscd,amssymb,amsmath}
\usepackage{color}

\setlength{\marginparwidth}{1in}

\title{A short proof of a conjecture of Matsushita}

 \author[B. Bakker]{Benjamin Bakker}
\address{\noindent B. Bakker:  Dept. of Mathematics, Statistics, and Computer Science, University of Illinois at Chicago, Chicago, USA.}
\email{bakker.uic@gmail.com}

%\author[J. Tsimerman]{Jacob Tsimerman}
%\address{\noindent J. Tsimerman:  Dept. of Mathematics, University of Toronto, Toronto, Canada.}
%\email{jacobt@math.toronto.edu}

\def\O{\mathcal{O}}

\def\R{\mathbb{R}}
\def\Q{\mathbb{Q}}

\def\an{\mathrm{an}}
\def\dual{\vee}
\begin{document}
\maketitle
\begin{abstract}
    In this note we build on the arguments of van Geemen and Voisin \cite{vGV} to prove a conjecture of Matsushita that a Lagrangian fibration of an irreducible hyperk\"ahler manifold is either isotrivial or of maximal variation.  We also complete a partial result of Voisin \cite{voisin} regarding the density of torsion points of sections of Lagrangian fibrations. 
\end{abstract}
\def\opart{\circ}
\vskip2em
Let $X$ be an irreducible compact hyperk\"ahler manifold, that is, a simply-connected compact K\"ahler manifold $X$ for which $H^0(X,\Omega^2_X)=\C\sigma$ for a nowhere-degenerate holomorphic two-form $\sigma$.  A Lagrangian fibration of $X$ is a proper morphism $f:X\to B$ to a normal compact analytic variety $B$ whose generic fiber is smooth, connected, and Lagrangian (see \cite{LagFib} for a recent survey).  It follows that every smooth fiber is an abelian variety.  We let $B^\opart \subset B$ be a dense Zariski open smooth subset over which the base-change $f^\opart:X^\opart\to B^\opart$ is smooth.  By the period map of $f$ we mean the period map $\phi: B^\opart \to S$ to an appropriate moduli space $S$ of polarized abelian varieties associated to the natural variation of (polarized) weight one integral Hodge structures on $B^\opart$ with underlying local system $R^1f^\opart_*\Z_{X^\opart}$.  We say $f$ is \emph{isotrivial} if the period map is trivial (equivalently if $R^1f_*^\opart\Z_{X^\opart}$ has finite monodromy) and \emph{of maximal variation} if the period map is generically finite.   

Our main result is to resolve a conjecture of Matsushita:

\begin{thm}\label{main}
Let $X$ be an irreducible hyperk\"ahler manifold (or more generally a primitive symplectic variety in the sense of \cite{BLglobalmoduli}).  Then any Lagrangian fibration $f:X\to B$ is either isotrivial or of maximal variation. 
\end{thm}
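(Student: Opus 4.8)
The plan is to read off the generic rank of the period map $\phi$ from the positivity of the base, and the key is the symplectic identification of the Hodge bundle with the (co)tangent bundle of $B$. Write $n=\dim B$ and $d=\dim\overline{\phi(B^\opart)}$ for the generic rank of $\phi$; the goal is to show $d\in\{0,n\}$. I would first record the two structural inputs that make this tractable. On the one hand, nondegeneracy of $\sigma$ together with the Lagrangian condition gives an isomorphism $\sigma\colon T_{X/B}\xrightarrow{\sim} f^*\Omega^1_B$, and pushing forward recovers Matsushita's identifications $R^1f_*\mathcal{O}_X\cong\Omega^1_B$; dually, the rank-$n$ Hodge bundle $\cE:=f_*\Omega^1_{X^\opart/B^\opart}$ (with fiber $H^{1,0}$ of the abelian fibers) is isomorphic to $T_{B^\opart}$, so that $\det\cE\cong -K_{B^\opart}$. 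On the other hand, the determinant of the Hodge bundle of an abelian scheme is the pullback of the ample modular (``Hodge'') line bundle $\lambda$ on the period domain quotient $S$, so on $B^\opart$ one has $-K_{B^\opart}\cong\phi^*\lambda$ up to a flat, finite-monodromy ambiguity. The slogan is that the anticanonical class of the base is, infinitesimally, the pullback of an ample class under the period map.

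Next I would factor the period map through its image: after shrinking $B^\opart$ we may write $\phi=g\circ p$ with $p\colon B^\opart\to Z$ a fibration with connected fibers of dimension $n-d$ and $g$ generically finite onto a $d$-dimensional image. On each fiber $F$ of $p$ the local system $R^1f^\opart_*\Z$ has constant period, hence (since the stabilizer of a point of the period domain is compact, so meets the discrete monodromy group in a finite set) finite monodromy; thus $\cE|_F$ is flat and $\det\cE\cdot C=0$ for every complete curve $C\subset F$. Combined with $\det\cE\cong -K_{B^\opart}$ this says $-K_B$ is numerically trivial along the fibers of $p$. But $B$ is a Fano variety with $\Q$-factorial klt singularities and Picard number one by the work of Matsushita and Hwang (and its analogue for primitive symplectic varieties, see the survey \cite{LagFib}), so $-K_B$ is ample and meets every curve positively. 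If $0<d<n$ the fibers of $p$ are positive-dimensional and contain curves, yielding the contradiction $0=-K_B\cdot C>0$; hence $d=0$ or $d=n$, which is precisely the isotrivial/maximal-variation dichotomy.

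I expect the genuine difficulty to lie in making $\det\cE\cong -K_B$ precise across the boundary. The isomorphism $\cE\cong T_{B^\opart}$ lives only on the smooth locus $B^\opart$, whereas ampleness of $-K_B$ is a statement on the (singular) compactification $B$; the two differ by a correction supported on the discriminant $B\setminus B^\opart$, and it is exactly this boundary term that reconciles an ample $-K_B$ with the numerically trivial flat determinant occurring in the isotrivial case. The crux is therefore to control this degeneration contribution and to guarantee that a general fiber of $p$ carries a complete curve meeting the discriminant in a controlled way, so that the numerical computation survives on $B$ and not merely on $B^\opart$. A cleaner route that sidesteps the compactification entirely is to argue infinitesimally: the Lagrangian condition forces the derivative of $\phi$ to be a symmetric cubic $c\in\Sym^3\Omega^1_{B^\opart}$ (the Donagi--Markman cubic), the kernel distribution $\{v:\iota_v c=0\}$ integrates to the fibers of $p$, and one plays the vanishing $\iota_v c=0$ off against the positivity carried by $\cE\cong T_{B^\opart}$; I would fall back on this purely local formulation if the global boundary bookkeeping proves too delicate.
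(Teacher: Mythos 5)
Your proposal takes a completely different route from the paper, which is Hodge-theoretic in the spirit of van Geemen--Voisin: there one shows that if $\phi$ is not generically finite then $V_\R\otimes T^\vee$ acquires a nonzero subvariation of level at most one (coming from the Leray filtration for $X^\opart\to Z$ and the fact that $\sigma$ dies on the fibers of $f$), and then semisimplicity of polarizable variations together with the irreducibility of $R^1f^\opart_*\R$ forces every irreducible complex factor of $V_\C$ to be unitary, i.e.\ $f$ is isotrivial. Your argument instead tries to play $\det\cE\cong -K_{B^\opart}$ against the ampleness of $-K_B$. The gap is exactly the one you flag, and it is not boundary bookkeeping --- it is the whole content of the problem. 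Your numerical contradiction requires a \emph{complete} curve $C$ contained in a general fiber $F$ of $p:B^\opart\to Z$: only for such a $C$ do you simultaneously get $\det\cE\cdot C=0$ (finite monodromy along $F$) and $\det\cE\cdot C=-K_B\cdot C$ (because $C$ is disjoint from the discriminant, which is where the extensions of $\det\cE$ and $-K_{B}$ diverge). But $F$ is only a quasi-projective variety of dimension $n-d\geq 1$ and may contain no complete curve at all; in the critical case $n-d=1$ the fibers of $p$ are open curves whose closures necessarily meet the discriminant, and on the closure $\bar C$ the boundary correction re-enters and can account for all of $-K_B\cdot\bar C>0$, exactly as it does for an isotrivial elliptic K3 where $f_*\omega_{S/\mathbb{P}^1}=\cO_{\mathbb{P}^1}(2)$ is entirely discriminant contribution. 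So nothing rules out $0<d<n$ without a genuinely new idea controlling that contribution.

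Two further problems. First, the Fano/Picard-number-one structure of $B$ that you invoke is a theorem only for projective $X$ (Matsushita, Hwang), whereas the statement covers K\"ahler $X$ and singular primitive symplectic varieties, for which the paper's Hodge-theoretic argument still applies. Second, the proposed fallback via the Donagi--Markman cubic is a restatement of the problem rather than a proof: the vanishing $\iota_vc=0$ along the kernel distribution is pointwise linear algebra with no positivity to play against, and there do exist Lagrangian fibrations of (non-irreducible) holomorphic symplectic manifolds --- e.g.\ products of an isotrivial and a maximal-variation family --- with intermediate variation. Some global input special to $h^{2,0}(X)=1$ is therefore unavoidable, and a purely local infinitesimal argument cannot close the gap.
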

Both possibilities in Theorem \ref{main} occur for K3 surfaces $S$---see for example \cite[Chapter 11]{Huybrechtsbook}---and therefore also for their Hilbert schemes $S^{[g]}$ in each (even) dimension.  Primitive symplectic varieties are the natural singular analog (as far as deformation theory is concerned) of irreducible hyperk\"ahler manifolds; see below for the definition and the precise meaning of a Lagrangian fibration in this context.  

Let $T_0\subset H^2(X,\Q)$ be the rational transcendental lattice, namely, the smallest rational Hodge substructure containing $[\sigma]\in H^{2,0}(X)$.  Theorem \ref{main} was proven by van Geemen and Voisin \cite[Theorem 5]{vGV} assuming $X$ is projective, that $T_0$ has generic (special) Mumford--Tate group (namely $\SO(T_0,q_X)$, where $q_X$ is the Beauville--Bogomolov--Fujiki form), and that $\rk T_0\geq 5$, by showing that under these conditions any fiber of a Lagrangian fibration that is not of maximal variation must be a factor of the Kuga--Satake variety of $T_0$.  Their result in particular applies to the generic projective deformation of $f:X\to B$ assuming $b_2(X)\geq 7$, which includes all known deformation types.  The Beauville--Mukai system of a nonprimitve basepoint-free ample divisor on a K3 surface has recently been shown to be of maximal variation in many cases by Dutta--Huybrechts \cite{duttahuybrechts} (see Corollary \ref{DuttaHuy} below) by understanding the derivative of the period map. 

We will instead prove Theorem \ref{main} by considering the complex variation of Hodge structures on $R^1f_*^\opart\C_{X^\opart}$.  We first recall the basic properties of complex variations.  A complex variation of Hodge structures on a Zariski open subset of a compact analytic variety (see for example \cite{DeligneMon}) consists of a $\C$-local system $V$ and a holomorphic (resp. antiholomorphic) descending filtration $F^\bullet$ (resp. $\bar F^\bullet$) such that we have a splitting of the sheaf of $C^\infty$ sections $A^0(V)=\bigoplus_p A^0(V^p)$ where $V^p=F^p\cap \bar F^{-p}$ and the flat connection maps $A^0(V^p)$ to $A^{1,0}(V^{p-1})\oplus A^1(V^p)\oplus A^
{0,1}(V^{p+1})$.  We refer to the grading $V^p$ as the Hodge grading and we say the level of the variation is the difference $p_{\mathrm{max}}-p_{\mathrm{min}}$ where $p_{\mathrm{max}}$ (resp. $p_{\mathrm{min}}$) is the maximum (resp. minimum) Hodge degree $p$ for which $V^p\neq 0$.  Observe that the level of a tensor product $V\otimes W$ is the sum of the levels of $V$ and $W$.  A polarization of the variation is a flat hermitian form $h$ for which the splitting is orthogonal and $(-1)^ph$ is positive definite on $V^p$.  In this case $\bar F^{-p}=(F^{p+1})^\perp$.  A variation which admits a polarization is said to be polarizable.  We define $\C(-d)$ to be the polarizable complex Hodge structure on $V=\C$ with $V^{d}=V$.  

Recall that the category of polarizable complex variations of Hodge structures is semi-simple.  The theorem of the fixed part \cite{Schmid} says that for two polarizable complex variations $V,W$, the group $\Hom(V,W)$ of morphisms of local systems has a natural complex Hodge structure whose degree $p$ part is exactly the morphisms of complex variations $V\to W(p)$.  We have the following further consequence due to Deligne:

\begin{thm}[{\cite[1.13 Proposition]{DeligneMon}}]\label{Deligne}Suppose $V$ is a $\C$-local system underlying a polarizable complex variation of Hodge structures and that we have a splitting of $\C$-local systems
    \begin{equation}\label{lssplit}V=\bigoplus_i N_i\otimes A_i\end{equation}
    where the $N_i$ are irreducible and pairwise non-isomorphic and the $A_i$ are nonzero complex vector spaces.  Then
\begin{enumerate}
\item Each $N_i$ underlies a polarizable complex variation of Hodge structures, unique up to shifting the Hodge grading.
\item Each polarizable complex variation of Hodge structures with underlying local system $V$ arises from \eqref{lssplit} by equipping each $N_i$ with its unique (up to shifts) polarizable complex variation of Hodge structures and each $A_i$ with a uniquely determined polarizable complex Hodge structure, namely $A_i=\Hom(N_i,V)$.
\end{enumerate}

\end{thm}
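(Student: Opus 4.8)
The plan is to produce the decomposition \eqref{lssplit} as the underlying local system of a decomposition of $V$ into simple objects of the semi-simple category of polarizable complex variations, and then to read off the variations on the $N_i$ and the Hodge structures on the $A_i$. Writing $\underline{(\,\cdot\,)}$ for the underlying local system, I would first invoke semi-simplicity to write $V=\bigoplus_j W_j$ with each $W_j$ a simple polarizable complex variation. The whole of part (1) then reduces to the following claim: \emph{the local system underlying a simple polarizable complex variation is irreducible.} Granting this, each $\underline{W_j}$ is one of the irreducible local systems occurring in $V$; grouping the $W_j$ according to the isomorphism class of $\underline{W_j}$ and comparing with the unique isotypic decomposition of the local system $V$ recovers \eqref{lssplit}, with $N_i\cong\underline{W_j}$ for the indices $j$ in the $i$-th group, and thereby exhibits a polarizable complex variation on each $N_i$.

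To prove the claim I would argue by contradiction using the polarization, which I expect to be the main obstacle. Let $W$ be a simple polarizable complex variation and suppose $\underline W\cong N^{\oplus k}$ with $N$ irreducible. By Schur's lemma the algebra of flat endomorphisms is $\End(\underline W)\cong M_k(\C)$, and by the theorem of the fixed part this algebra carries a complex Hodge structure whose grading is multiplicative, with $\id$ in degree $0$ and degree-$0$ part equal to $\End_{\mathrm{CVHS}}(W)=\C$ (as $W$ is simple and $\C$ is algebraically closed). The polarization on $W$ induces an adjoint involution $\alpha\mapsto\alpha^*$ on $\End(\underline W)$ carrying the degree-$p$ part to the degree-$(-p)$ part and satisfying the positivity $\Tr(\alpha\alpha^*)>0$ for $\alpha\neq 0$. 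If $k>1$ the grading is nontrivial, so choosing $0\neq\alpha$ of maximal degree $p_0>0$, the product $\alpha\alpha^*$ lies in the degree-$0$ part $\C\cdot\id$ and has positive trace; hence $\alpha\alpha^*$ is a nonzero multiple of the identity and $\alpha$ is invertible. But a nonzero flat endomorphism homogeneous of nonzero Hodge degree shifts the Hodge grading of $\underline W$ and so annihilates an extremal nonzero Hodge summand, whence it cannot be invertible—a contradiction. Therefore $k=1$ and $\underline W$ is irreducible.

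It remains to assemble the uniqueness in (1) and the statement (2). For uniqueness, two polarizable complex variations $N,N'$ with the same irreducible underlying local system have $\Hom(N,N')\cong\End(\underline N)=\C$ one-dimensional, hence concentrated in a single Hodge degree $p_0$; by the theorem of the fixed part the generator is a morphism of variations $N\to N'(p_0)$ that is an isomorphism of local systems, and therefore an isomorphism of variations, so $N'\cong N(p_0)$. For (2), fix the polarizable complex variation on each $N_i$ supplied by (1). Applying the theorem of the fixed part to the polarizable variation $N_i^\dual\otimes V$ equips $A_i=\Hom(N_i,V)$ with a polarizable complex Hodge structure whose degree-$p$ part is $\Hom(N_i,V(p))$, and the tautological evaluation map $N_i\otimes\Hom(N_i,V)\to V$ is, by naturality, a morphism of variations; since it is an isomorphism of local systems onto the $i$-th isotypic component, it is an isomorphism of variations. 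This realizes every polarizable variation with underlying local system $V$ in the asserted form, and since the variation on each $N_i$ is determined up to shift and then $A_i=\Hom(N_i,V)$ is determined, the construction is unique.
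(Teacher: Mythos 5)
The paper offers no proof of this statement---it is quoted verbatim from Deligne's \emph{Un th\'eor\`eme de finitude pour la monodromie}---so there is nothing in-text to compare against and your argument must stand on its own. Its architecture is sound and is essentially Deligne's: decompose $V$ into simple polarizable variations, show that the local system underlying a simple polarizable variation is irreducible by playing the positivity of the polarization against the Hodge grading on the algebra of flat endomorphisms, then match the resulting decomposition with the isotypic decomposition \eqref{lssplit} and read off the Hodge structure on $A_i=\Hom(N_i,V)$ from the theorem of the fixed part. The deductions of the uniqueness in (1) and of statement (2) from the key claim are correct.

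The one genuine gap is at the opening of the proof of the claim: you argue by contradiction from ``suppose $\underline W\cong N^{\oplus k}$,'' but the negation of irreducibility is not that $\underline W$ is isotypic. All you may assume is that $\underline W$ is a sub-local system of $V$, hence---by the hypothesis \eqref{lssplit}, which makes $V$ semisimple as a local system---a direct sum of copies of the various $N_i$, so that $\End(\underline W)\cong\prod_m M_{k_m}(\C)$ is a product of matrix algebras rather than a single one. Fortunately your argument survives this generalization verbatim: the algebra is still graded with degree-zero part $E_0=\End_{\mathrm{CVHS}}(W)=\C\cdot\id$, the adjoint involution still exchanges $E_{p}$ and $E_{-p}$ (which is also what guarantees that a nontrivial grading has a nonzero piece in some \emph{positive} degree), and still satisfies $\Tr(\alpha\alpha^*)\neq 0$ for $\alpha\neq 0$ (the sign is in fact $(-1)^{p}$ times a positive quantity for $\alpha$ of degree $p$, but nonvanishing is all you use), so a nonzero element of maximal positive degree is again invertible while annihilating the top Hodge summand. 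This forces $\End(\underline W)=E_0=\C\cdot\id$, and since $\underline W$ is semisimple this yields irreducibility. Note that the semisimplicity of $\underline W$, which here comes for free from \eqref{lssplit}, is genuinely needed at this last step: $\End(\underline W)=\C$ alone does not imply irreducibility. With that repair the proof is complete.
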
 
In particular, the theorem implies a polarizable complex variation is irreducible if and only if the underlying local system is.  

\begin{remark}
The $\C$-local system $V$ underlying a polarizable complex variation of Hodge structures on an algebraic space is semi-simple.  This is a consequence of an unpublished result of Nori (see \cite[\S1.12]{DeligneMon}) and uses that the orthogonal complement of flat subbundle with respect to the hodge metric (which is a harmonic metric) is flat.  In our setting, we will only need to apply this to $V$ underlying a polarizable integral variation of Hodge structures.  In this case, the semi-simplicity of the underlying $\Q$-local system is a result of Deligne \cite[4.2.6]{DeligneHII}.  Note that for a perfect field $K$ and a field extension $K\subset L$, a $K$-local system $V$ is semi-simple if and only if $V_L$ is, as semi-simplicity is equivalent to being generated by simple sub-local systems. 
\end{remark}

Given an $\R$-local system $V$, a polarizable real variation of Hodge structures\footnote{Throughout, by a real variation we mean a pure real variation, unless otherwise specified.} on $V$ in the usual sense naturally induces a polarizable complex variation on $V_\C$.  Conversely, a polarizable complex variation on $V_\C$ comes from a polarizable real variation on $V$ if complex conjugation flips the Hodge grading, or more precisely if for some (hence any) polarization $h$ the isomorphism of local systems $V_\C\to V_\C^\vee$ given by $y\mapsto h(-,\bar y)$ induces an isomorphism of complex variations $V_\C\to V_\C^\vee(-w)$ for some (uniquely determined) $w$.  Indeed, if this is the case then $V^p\xrightarrow{\cong}(V^{w-p})^\vee$ so $\bar {V^p}=V^{w-p}$.  Moreover, for even $w$ (resp. odd $w$) a real polarization is provided by the symmetric (resp. antisymmetric) real form $q(x,y)=h(x,\bar y)+h(y,\bar x)$ (resp. $q(x,y)=i(h(x,\bar y)-h(y,\bar x))$), since $q(x,\bar x)=h(x,x)+h(\bar x,\bar x)$ (resp. $-iq(x,\bar x)=h(x,x)-h(\bar x,\bar x)$) is definite of alternating sign on $V^p$.  

The category of polarizable real variations is also semi-simple.  Observe that by Theorem \ref{Deligne} any isotypic component $W$ of an $\R$-local system $V$ underlying a polarizable real variation is canonically a real subvariation, as the same is true over $\C$ and the isomorphism $V_\C\to V_\C^\vee(-w)$ coming from $h$ restricts to an isomorphism $W_\C\to W_\C^\vee(-w)$.  If $V$ is a single isotypic factor, then $V_\C$ either has one self-conjugate irreducible factor $N$ or has two non-isomorphic conjugate irreducible factors $N,\bar N$.  Note that $N^\dual\cong \bar N$ via the polarization.  Note also that the level of a polarizable real variation $V$ is at least as large as the level of any of the irreducible factors of $V_\C$.

We say that a real or complex variation is \emph{isotrivial} if the Hodge filtration is flat, or equivalently if the irreducible factors of the complexification are level zero\footnote{Or equivalently, if the monodromy is unitary (by Theorem \ref{Deligne}); since there may not be an integral structure, this does not necessarily mean the monodromy is finite.}.  To summarize the above discussion:

\begin{lemma}\label{alternative}Let $V$ be an irreducible polarizable real variation of Hodge structures of level one.  Then $V$ is either isotrivial, or every irreducible factor of $V_\C$ is level one.
\end{lemma}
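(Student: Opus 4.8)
The plan is to reduce the statement to a comparison of the levels of the irreducible factors of $V_\C$, exploiting that $V$ is both real and irreducible. First I would record the structure of $V_\C$ as a local system. Since $V$ is irreducible, the discussion preceding the lemma shows that $V_\C$ is a single isotypic local system whose distinct irreducible summands are either a single self-conjugate factor $N$, or a complex-conjugate pair $N,\bar N$ with $N\not\cong\bar N$. In particular, any two irreducible factors of $V_\C$ are either isomorphic or complex conjugate to one another.

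Next I would argue that all irreducible factors of $V_\C$ share a common level $\ell$. By Theorem \ref{Deligne}(1) each factor underlies a polarizable complex variation that is unique up to a shift of the Hodge grading, so its level is well defined; isomorphic factors then trivially have equal level, and passing to the complex conjugate variation reflects the Hodge grading (recall $\bar{V^p}=V^{w-p}$) and hence preserves the level, so $N$ and $\bar N$ have equal level as well. Combined with the previous step this produces the common value $\ell$. Invoking the remark that the level of $V$ is at least as large as the level of any irreducible factor of $V_\C$, we conclude $\ell\leq 1$.

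It then remains to split on the two possible values of $\ell$. If $\ell=0$, every irreducible factor of $V_\C$ is of level zero, which is exactly the defining condition for $V$ to be isotrivial. If $\ell=1$, every irreducible factor of $V_\C$ is of level one, which is the second alternative. These two cases are exhaustive, which proves the lemma.

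The point requiring care—and the only place the level-one hypothesis is used—is that the level of $V$ need not equal the common level $\ell$ of its complex factors. Writing $V_\C=\bigoplus_i N_i\otimes A_i$ as in \eqref{lssplit}, the multiplicity spaces $A_i$ carry their own Hodge structures, and a relative shift between the factors (recorded by the $A_i$) can make the level of $V$ strictly larger than $\ell$; for instance two level-zero conjugate factors placed in distinct Hodge degrees still yield a level-one, but isotrivial, variation $V$. Thus "level one'' for $V$ does not by itself force the factors to be level one. The genuine content is that isotriviality is detected by the factor levels (by definition) and that these levels all coincide, so the level-one hypothesis serves only to bound $\ell$ by one and thereby restrict it to the two cases above.
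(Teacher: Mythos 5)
Your proof is correct and follows essentially the same route as the paper, which states this lemma as a summary of the preceding discussion rather than giving a separate argument: the irreducible factors of $V_\C$ are all isomorphic or conjugate, conjugation preserves level, the level of $V$ bounds the common factor level $\ell$ by one, and $\ell=0$ is precisely the definition of isotriviality. Your closing remark that the level of $V$ can strictly exceed $\ell$ (two level-zero conjugate factors in shifted degrees) is exactly the phenomenon illustrated by the paper's eigenspace example.
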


Before turning to the proof of Theorem \ref{main} we recall the definition of a primitive symplectic variety.  Let $X$ be a symplectic variety in the sense of Beauville\footnote{This definition is equivalent to Beauville's original one.} \cite{beauvillesing}, that is, a compact K\"ahler variety with rational singularities and a nowhere degenerate 2-form $\sigma$ on its regular locus $X^\reg$.  We say that $X$ is primitive symplectic if $H^1(X,\O_X)=0$ and $H^0(X^\reg,\Omega^2_{X^\reg})=\C\sigma$.  As the singularities are rational, for any resolution $\pi:Y\to X$ the form $\sigma$ extends to a two-form on $Y$ \cite[Corollary 1.7]{KebekusSchnell}.  Moreover, $\pi^*:H^2(X,\Q)\to H^2(Y,\Q)$ is injective, so the Hodge structure on $H^2(X,\Q)$ is pure, and we have an induced isomorphism $\pi^*:H^{2,0}(X)\to H^{2,0}(Y)$ (see \cite{BLglobalmoduli} for details).  In particular we have a well defined class $[\sigma]\in H^{2,0}(X)$.  

By a Lagrangian fibration of a primitive symplectic variety we still mean a proper morphism $f:X\to B$ to a normal compact analytic variety $B$ whose generic fiber is smooth, connected, and Lagrangian.  Each smooth fiber will still be an abelian variety.  Moreover, $B$ is in fact K\"ahler and Moishezon \cite{varoouchas} (and in particular an algebraic space) since $f$ is equidimensional as in \cite[Lemma 1.17]{LagFib}, now using functorial pullback of reflexive forms \cite[Theoreom 1.11]{KebekusSchnell} and the fact that $R\pi_*\omega_Y\cong\omega_X\cong \O_X$ by the rationality of the singularities of $X$ \cite[\S5.1]{kollarmori}.

We use the same notation as above:  $B^\opart \subset B$ is a dense Zariski open smooth subset over which the restriction $f^\opart:X^\opart\to B^\opart$ is smooth and $\phi: B^\opart \to S$ is the period map associated to the variation of (polarized) weight one integral Hodge structures on $B^\opart$ with underlying local system $R^1f^\opart_*\Z_{X^\opart}$.

\begin{proof}[Proof of Theorem \ref{main}]

Let $V_\Z:=R^1f^\opart_* \Z_{X^\opart}$.  We start with the following result of Voisin, whose proof we give for convenience (and to extend it slightly).

\begin{lemma}[{\cite[Lemma 5.5]{voisin}}]\label{irred} $V_\R$ is irreducible as a polarizable real variation of Hodge structures.
\end{lemma}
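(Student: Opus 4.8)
The plan is to exploit the Lagrangian condition to convert a hypothetical splitting of $V_\R$ into a splitting of the holomorphic symplectic form $\sigma$, contradicting $\dim H^0(X,\Omega^2_X)=1$. The key structural input is the standard identification attached to a Lagrangian fibration: since $\sigma$ restricts to zero on each (Lagrangian) fiber, it lies in the first step of the filtration of $\Omega^2$ by $(f^\opart)^*\Omega^\bullet_{B}$, and its symbol there is a homomorphism $\psi\colon T_{B^\opart}\to f^\opart_*\Omega^1_{X^\opart/B^\opart}=F^1V_\C$. Equivalently, contracting a local lift of a vector field on the base against $\sigma$ and restricting to the fibers realizes $\psi$, and the nondegeneracy of $\sigma$ together with the Lagrangian condition makes $\psi$ an isomorphism $T_{B^\opart}\xrightarrow{\sim}F^1V_\C$. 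Thus $\sigma$ \emph{is} this isomorphism, recorded as a global section of $\Omega^1_{B^\opart}\otimes F^1V_\C$.

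Now suppose for contradiction that $V_\R$ is reducible. As polarizable real variations form a semisimple category, I may write $V_\R=V'\oplus V''$ with $V',V''\neq 0$, taking $V''=(V')^\perp$ for the polarization. Passing to $\C$ and to the Hodge filtration splits $F^1V_\C=F^1V'_\C\oplus F^1V''_\C$ as holomorphic subbundles, and through $\psi$ it splits $T_{B^\opart}$ compatibly. Because the decomposition is flat and orthogonal, the projection onto $V'$ is a flat Hodge idempotent; it induces a holomorphic idempotent $P$ on $T_{X^\opart}$, splitting the vertical tangent bundle into its $V'$- and $V''$-parts and, via $\psi$, splitting the $\sigma$-orthogonal horizontal directions as well. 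Setting $\sigma'=\sigma(P\,\cdot\,,P\,\cdot\,)$ and $\sigma''=\sigma((\id-P)\,\cdot\,,(\id-P)\,\cdot\,)$ gives holomorphic two-forms on $X^\opart$ with $\sigma=\sigma'+\sigma''$, the cross terms vanishing by the block structure (concretely, in action--angle form $\sigma=\sum db_i\wedge d\theta_i$ along the torus fibers, the flat splitting of the angle coordinates and the induced splitting of the action coordinates break the sum into the two blocks). Each of $\sigma',\sigma''$ is nonzero, since $\sigma$ pairs the vertical and horizontal $V'$-directions nondegenerately and likewise for $V''$, so $\sigma$ and $\sigma'$ are linearly independent.

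It remains to promote $\sigma',\sigma''$ to global forms, for then $\dim H^0(X,\Omega^2_X)\geq 2$, contradicting the definition of a (primitive) symplectic variety. This extension across the discriminant is the crux. The monodromy of $V$ is quasi-unipotent, so by the nilpotent orbit theorem \cite{Schmid} the subbundle $F^1V'_\C$, and hence $\sigma'$, extends with moderate growth and at worst logarithmic poles on a good compactification. Since the decomposition is orthogonal for the Hodge metric, $P$ is a metric projection and $|\sigma'|\leq|\sigma|$ pointwise, so $\sigma'$ is $L^2$; this upgrades the meromorphic extension to an honest holomorphic two-form on a resolution $Y\to X$, and therefore on $X$. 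In the primitive symplectic case one argues on $X^\reg$ and instead invokes functorial extension of reflexive forms \cite{KebekusSchnell} together with $H^0(X^\reg,\Omega^2_{X^\reg})=\C\sigma$. The construction of $\sigma',\sigma''$ on $X^\opart$ is soft; the genuine difficulty, and the step I expect to require real care, is ruling out poles along the bad fibers, which is exactly where quasi-unipotency and the $L^2$ (respectively reflexive-extension) input are needed.
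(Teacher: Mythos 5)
Your strategy---split $\sigma$ itself into two nonzero holomorphic $2$-forms and contradict $h^{2,0}(X)=1$---is a genuinely different route from the paper's, but as written it has two gaps, one of which already breaks the construction on $X^\opart$. First, the idempotent $P$ on $T_{X^\opart}$ does not exist canonically: the vertical bundle $T_{X^\opart/B^\opart}$ is Lagrangian, so its $\sigma$-orthogonal complement is \emph{itself}; there are no ``$\sigma$-orthogonal horizontal directions,'' and the extension $0\to T_{X^\opart/B^\opart}\to T_{X^\opart}\to f^{\opart*}T_{B^\opart}\to 0$ has no preferred holomorphic splitting. What the Lagrangian condition actually gives you is that $\sigma$ lies in $F^1:=\ker(\Omega^2_{X^\opart}\to\Omega^2_{X^\opart/B^\opart})$, and only its image in the graded piece $f^{\opart*}\Omega^1_{B^\opart}\otimes\Omega^1_{X^\opart/B^\opart}$ is canonical. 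You can apply the flat projector to that image to get the ``$V'$-block,'' but producing $\sigma'\in H^0(X^\opart,\Omega^2_{X^\opart})$ requires lifting along $F^1\twoheadrightarrow f^{\opart*}\Omega^1_{B^\opart}\otimes\Omega^1_{X^\opart/B^\opart}$, which is obstructed (in $H^1(X^\opart,f^{\opart*}\Omega^2_{B^\opart})$) and in any case non-unique; the action--angle picture hides exactly this ambiguity, since $\sigma-\sum db_i\wedge d\theta_i$ is an undetermined form pulled back from the base. Second, even granting $\sigma'$ on $X^\opart$, the extension across the discriminant is not established: your pointwise bound $|\sigma'|\leq|\sigma|$ holds for the Hodge metric, which degenerates along $B\setminus B^\opart$, so it does not yield boundedness or square-integrability with respect to a K\"ahler metric on $X$, which is what Riemann/$L^2$ extension across the divisor $f^{-1}(B\setminus B^\opart)$ would require. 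The nilpotent orbit theorem controls $F^1V'_\C$ on the base, not a $2$-form on the total space.

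For comparison, the paper's proof avoids both issues by never leaving cohomology: if $V_\R=V'\oplus V''$ as polarizable real variations, the polarizations of the two factors give two linearly independent flat global sections of $\wedge^2 V_\R^\vee\cong \wedge^2 V_\R(2)$, i.e.\ a $\geq 2$-dimensional subspace of $H^0(B^\opart,R^2f^\opart_*\R_{X^\opart})$. By Deligne's global invariant cycles theorem this space is the image of $H^2(X,\R)$, and by Matsushita's lemma that image restricts to a rank-one subspace of $H^2(X_b,\R)$---contradiction. This uses the global geometry of $X$ only through $H^2$ and requires no extension of differential forms over singular fibers. If you want to salvage your approach, the honest version of it is essentially to show that the class of the putative $\sigma'$ in $H^0(B^\opart,R^2f^\opart_*\C)$ is a new invariant section, which lands you back at the paper's argument.
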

\begin{proof}
First assume $X$ is smooth.  By a result of Matsushita \cite[Lemma 2.2]{matsushitaL} the restriction map $H^2(X,\R)\to H^2(X_b,\R)$ to a generic fiber of $f^\opart$ is rank one and by Deligne's global invariant cycles theorem $H^2(X,\R)\to H^0(B^\opart,R^2f^\opart_*\R_{X^\opart})$ is surjective \cite{DeligneHII}.  If $V_\R$ splits as a variation then the polarizations of the factors would yield a larger than one-dimensional space of sections of $R^2f^\opart_*\R_{X^\opart}=\wedge^2 V_\R$, which is a contradiction.

Now if $X$ is a primitive symplectic variety, one easily checks using the results of \cite{BLglobalmoduli} that Matsushita's proof carries through verbatim and that $H^2(X,\R)\to H^0(B^\opart,R^2f^\opart_*\R_{X^\opart})$ is still surjective, since the cokernel of $\pi^*:H^2(X,\R)\to H^2(Y,\R)$ is generated by exceptional divisors for a log resolution $\pi:Y\to X$ since $X$ has rational singularities.
\end{proof}

Suppose now that $f$ is not of maximal variation.  Define the real transcendental lattice $T\subset H^2(X,\R)$ to be the polarizable real Hodge substructure spanned by $[\sigma]$ and $[\bar{\sigma}]$.  We next claim that the polarizable real variation of Hodge structures $V_\R\otimes T^\vee$ has a nontrivial subvariation of level at most one after a finite base-change; the argument below is that of \cite{vGV} at its core, with some mild modifications. 

Let $\nu:B^{\opart\prime}\to B^\opart$ be a finite Galois \'etale cover so that the base-change $V'_\Z:=\nu^*V_\Z$ is pulled back along its period map $\phi' :B^{\opart\prime}\to S'$, where $S'$ is a level cover of $S$.  Note that up to replacing $B^{\opart\prime}$ with a further finite cover, we may assume $\phi'$ can be embedded in a proper map $\bar \phi':\bar {B}^{\opart\prime}\to S'$ \cite{Giii}.  Denote by $Z\subset S'$ the image of $\bar\phi'$, by $\psi:B^{\opart\prime}\to Z$ the resulting map, and by $V''_\Z$ the variation on $Z$ so that $V'_\Z=\psi^*V_\Z''$.    The map $\bar\phi'$ and its image $Z$ are in fact algebraic \cite[Theorem 3.1]{borel}.  We shrink $Z$ (and $B^\opart,B^{\opart\prime},X^\opart$) so that it is smooth and so that $R^1\psi_*\R_{B^{\opart\prime}}$ is a local system, naturally underlying a graded polarizable real variation of mixed Hodge structures whose only nonzero Hodge components are $(0,0),(1,0),(0,1),(1,1)$ (for example using Saito's theory of mixed Hodge modules \cite{saito88,saito90}).  Let $f^{\opart\prime}:X^{\opart\prime}\to B^{\opart\prime}$ be the base-change of $f$.  The natural map $H^2(X,\C)\to H^0(B^{\opart\prime},R^2f^{\opart\prime}_*\C_{X^{\opart\prime}})$ sends $[\sigma]$ and hence $T_\C$ to zero, since the fibers of $f$ are Lagrangian.  By the Leray spectral sequence we have a natural morphism $\pt_Z^*T\to R^1\psi_*V_\R'\cong V_\R''\otimes R^1\psi_*\R_{B^{\opart\prime}}$ in the category of real variations of mixed Hodge structures.  This map is nonzero from the following geometric description as in \cite{vGV}.  

Through a very general point $b\in B^{\opart\prime}$, say above a point $z\in Z$, let $F$ be the positive-dimensional fiber of $\psi$ through $b$.  The restricted family $X_F\to F$ has trivial monodromy so the following natural diagram commutes
\[\xymatrix{
T_\C\ar[r]\ar[d]&(V''_\C\otimes R^1\psi_*\C_{B^{\opart\prime}})_z\ar[d]\\
H^2(X_{F},\C)\ar[r]&H^1(X_b,\C)\otimes H^1(F,\C)
}\]
where the bottom arrow comes from the degeneration of the Leray spectral sequence for $X_F\to F$.  In the projective case we have $X_F\cong X_b\times F$ (possibly after a further base-change) and this map is just the K\"unneth projection.  The image of $[\sigma]$ is nonzero in the bottom right corner since: (i) $\sigma$ is nonzero when restricted to $X_F$ since $\dim X_F>\frac{1}{2}\dim X$; (ii) $\sigma|_{X_F}$ extends to a smooth compactification since $\sigma$ extends to a smooth compactification of $X^\opart$, so $[\sigma]\neq0\in H^2(X_F,\C)$; (iii) the image of $[\sigma]$ in $H^2(X_b,\C)\otimes H^0(F,\C)$ vanishes and $[\sigma]$ is not in the image of $ H^0(X_b,\C)\otimes H^2(F,\C)$, as it is not pulled back from $F$.

Thus, there is a nonzero morphism of real variations 
\begin{equation}\label{image} \gr_{-1}^W\psi^*(R^1\psi_*\R_{B^{\opart\prime}})^\vee\to V_\R'\otimes T^\vee.\end{equation}
As the category of polarizable real variations of (pure) Hodge structures is semi-simple, we therefore have a splitting
\begin{equation}\label{split}\notag V_\R'\otimes T^\vee=U\oplus W\end{equation}
of real variations, where $U\neq 0$ is the image of \eqref{image}.  In particular, $U$ has level at most one and weight -1.

Now by Lemma \ref{irred} the Galois group of $\nu$ acts transitively on the isotypic factors of $V'_\R$.  In particular, if $f$ (and therefore $V_\R$) is not isotrivial, no factor of $V'_\R$ (as a variation) is isotrivial, or else its entire isotypic component would be, and so would $V_\R$.  But then there can be no nonzero morphism of variations $V'_\R\otimes T^\vee\to U$.  Indeed, by Lemma \ref{alternative}, an irreducible factor $N$ of $V'_\C$ has level one of degrees 0,1, and $N\otimes T^\vee_\C$ can only map nontrivially to an irreducible factor of $U_\C$ of the form $N(1)$, while $\Hom(N\otimes T^\vee_\C,N(1))=T_\C(1)\cong\C(-1)\oplus\C(1)$ has no degree 0 elements.  Thus, $f$ must be isotrivial.
\end{proof}

\begin{eg}
We revisit the example from \cite[\S4]{vGV}.  Let $p\geq 5$ be a prime and $\lambda$ a $p$th root of unity.  Consider a family of abelian varieties $f:X\to B$ with a cyclic automorphism such that the induced automorphism $\alpha$ of $V_\R=R^1f_*\R_{X}$ has $\lambda$ as an eigenvalue on $V^{1,0}$ but not on $V^{0,1}$.  Let $\alpha'$ be the automorphism of $T^\vee$ with eigenvalue $\lambda^{-1}$ on $(T^\vee)^{-2,0}$ and eigenvalue $\lambda$ on $(T^\vee)^{0,-2}$.  Then $V_\R\otimes T^\vee$ has a level one factor, namely the $1$ eigenspace $(V_\R\otimes T^\vee)^1$ of $\alpha\otimes \alpha'$.  But the condition on the eigenvalues means the eigenspaces $(V_\C)^\lambda$ and $(V_\C)^{\lambda^{-1}}$ are level zero, and the real variation $(V_\C)^\lambda\oplus(V_\C)^{\lambda^{-1}}$ is an isotrivial real factor.
\end{eg}

We now discuss a few applications of Theorem \ref{main}.  Dutta--Huybrechts have proven many cases \cite[Theorem 1.1]{duttahuybrechts} of the following corollary, and have also shown that Theorem \ref{main} implies the general case:
\begin{cor}\label{DuttaHuy}
Let $H$ be a basepoint-free ample divisor on a K3 surface $S$.  Then the complete linear system $|H|$ is of maximal variation.
\end{cor}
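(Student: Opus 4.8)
The plan is to deduce Corollary \ref{DuttaHuy} from Theorem \ref{main} by ruling out the isotrivial alternative for the Beauville--Mukai system associated to a basepoint-free ample divisor $H$. First I would recall the geometry: for $H$ basepoint-free and ample on a K3 surface $S$, a general member $C \in |H|$ is a smooth curve of genus $g = \frac{1}{2}H^2 + 1$, and the total space $X = \{(C,L) : C \in |H|,\ L \in \mathrm{Pic}^{g-1}(C)\}$ (the relative compactified Jacobian, a suitable moduli space of sheaves on $S$) is an irreducible hyperk\"ahler manifold of dimension $2g$, deformation equivalent to a Hilbert scheme $S^{[g]}$. The support map $f : X \to |H| = \mathbb{P}^g$ sending a sheaf to its support curve is a Lagrangian fibration, whose generic fiber over $C$ is $\mathrm{Jac}(C)$. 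Theorem \ref{main} then applies and tells us $f$ is either isotrivial or of maximal variation, so it suffices to exclude isotriviality.

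The key step is therefore to show the fibration is not isotrivial, which amounts to showing the local system $R^1 f^\opart_* \Q_{X^\opart}$ does not have finite monodromy. The cleanest route is to identify this monodromy, over the locus of smooth curves in $|H|$, with the monodromy of the family of curves $C \in |H|$ acting on $H^1(C,\Q) \cong H^1(\mathrm{Jac}(C),\Q)$. So the plan is: (i) reduce isotriviality of $f$ to triviality (finiteness) of the monodromy representation of $\pi_1(U)$ on $H^1(C_u,\Q)$, where $U \subset |H|$ parametrizes smooth curves; (ii) observe that a nontrivial variation of Hodge structure forces infinite monodromy, since finite monodromy would make the period map constant and hence (by the theorem of the fixed part) force the Hodge filtration to be globally flat, contradicting the nonconstancy of the periods of the curves $C_u$ as $u$ varies in $|H|$.

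The substantive input is that the family $\{C_u\}_{u \in U}$ of smooth curves in a basepoint-free ample linear system on a K3 surface genuinely varies in moduli: the period map $U \to \cA_g$ (or the map to $\PS_g$) is nonconstant. Here I would use that the second fundamental form / derivative of this period map is computed by the multiplication map on sections, and basepoint-freeness plus ampleness guarantees this is nonzero. Concretely, by Green's work on infinitesimal variation of Hodge structure for curves in linear systems on surfaces, the differential of the period map at a general $C \in |H|$ is controlled by the cup product $H^0(C, N_{C/S}) \otimes H^0(C, K_C) \to H^1(C, T_C)^\vee \cong H^0(C, 2K_C)$, and this is surjective (in particular nonzero) precisely because $H = N_{C/S}$ is ample and basepoint-free on $S$ (so that the relevant multiplication maps of sections are surjective by base-point-free-pencil-type arguments). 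Thus the period map is nonconstant, monodromy is infinite, $f$ is not isotrivial, and by Theorem \ref{main} it is of maximal variation.

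The main obstacle I expect is step (iii): verifying cleanly that the period map of the family of curves is nonconstant, i.e.\ that the variation of Hodge structure on $H^1(C_u)$ is genuinely nontrivial. One must be careful that ``nonisotrivial as a family of curves'' correctly transfers to ``nonisotrivial Lagrangian fibration'' in the sense defined via $R^1 f^\opart_* \Z$; this is immediate once one identifies the fiber abelian variety with $\mathrm{Jac}(C_u)$ and matches polarized weight-one Hodge structures, but it requires knowing that the Beauville--Mukai fibration really has Jacobians of the support curves as its smooth fibers. The infinitesimal computation showing nonconstancy is the technical heart; everything else is a direct appeal to Theorem \ref{main} together with the dichotomy isotrivial/maximal-variation, with isotriviality excluded by the nonvanishing derivative of the period map.
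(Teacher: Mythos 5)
Your overall architecture --- realize $|H|$ as the base of the Beauville--Mukai Lagrangian fibration $f\colon X\to |H|$, invoke Theorem \ref{main}, and then rule out the isotrivial alternative --- is the right one, but it is not what the paper does: the paper's proof is a two-line citation to Dutta--Huybrechts, who prove the genus $2$ case unconditionally and, in their Proposition 5.2, carry out precisely the reduction you are sketching for $g\geq 3$. So you are in effect reproving the cited input, and the question is whether your version of the one substantive step --- non-isotriviality of the family of curves $\{C_u\}_{u\in U}$ --- actually goes through. As written, it does not.

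The gap is in your step (iii). To rule out isotriviality you need the moduli map $|H|\dashrightarrow M_g$ (equivalently, by Torelli, the period map) to be nonconstant, i.e.\ the Kodaira--Spencer composite $T_{[C]}|H|\cong H^0(S,H)/\C\cong H^0(C,N_{C/S})\to H^1(C,T_C)$ to be nonzero at a general $[C]$. The cup product $H^0(C,N_{C/S})\otimes H^0(C,K_C)\to H^0(C,2K_C)$ you invoke is not the relevant obstruction: the kernel of the Kodaira--Spencer map is the image of $H^0(C,T_S|_C)$, and on a K3 one has $T_S\cong\Omega^1_S$ and $N_{C/S}\cong K_C$, so $T_S|_C$ is an extension of $K_C$ by $K_C^{-1}$ and $h^0(C,T_S|_C)$ can a priori be as large as $g=h^0(C,N_{C/S})$; showing the composite is nonetheless nonzero (indeed of maximal rank) is exactly the nontrivial computation occupying the Dutta--Huybrechts paper, and it is where the primitive and non-primitive (and hyperelliptic) cases genuinely differ. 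Moreover the surjectivity you assert ``because $H$ is ample and basepoint-free'' fails as stated for hyperelliptic members ($\Sym^2H^0(K_C)\to H^0(2K_C)$ is not surjective for hyperelliptic $C$ of genus $\geq 3$), so it cannot be the mechanism. If you want a self-contained route to non-isotriviality that avoids this infinitesimal analysis, a cleaner option is Picard--Lefschetz: exhibit a component of the discriminant in $|H|$ whose generic member is an irreducible one-nodal curve with non-disconnecting node, so that the local monodromy is a transvection of infinite order on $H^1(C,\Z)$; but that existence statement also requires an argument for general basepoint-free ample $H$. One further small point: for non-primitive $H$ you should choose the degree of the relative Jacobian so that the Mukai vector is primitive (taking $\Pic^{g-1}$ need not do this), or else appeal to the primitive symplectic case of Theorem \ref{main}.
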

\begin{proof}
The genus 2 case is proven unconditionally in \cite[Prop. 5.4]{duttahuybrechts}, and the genus $g\geq 3$ case in \cite[Prop. 5.2]{duttahuybrechts} assuming Theorem \ref{main}.
\end{proof}

The next application uses a result of Gao, which is a simple application of the Ax--Schanuel theorem for universal families of abelian varieties \cite[Theorem 1.1]{GaoAB}.  Recall that for a projective family $f:X\to B$ of $g$-dimensional abelian varieties\footnote{Meaning $X$ is an abelian scheme over $B$, so there is in particular a 0-section of $f$.} equipped with a section $s$ and letting $
\tilde B\to B^\an$ be the universal cover, the Betti map $\beta: \tilde B\to H_1(X_b,\R)$ is the real analytic map obtained by taking the coordinates of the section $s$ with respect to the flat real-analytic trivialization of $f$.  Observe that $\beta^{-1}(H_1(X_b,\Q))$ is the set of points of $\tilde B$ at which $s$ is torsion.  If $\phi: B\to S$ is the period map of $f$ and $\mathcal{X}\to S$ the universal family of abelian varieties, then $s$ naturally yields a map $B\to \mathcal{X}$ lifting $\phi$.  We say that $s$ is of maximal variation if $B\to \mathcal{X}$ is generically finite.

\begin{prop}[{\cite[Theorem 9.1]{GaoAB}}]\label{AS}
Let $f:X\to B$ be a projective family of $g$-dimensional abelian varieties with $\dim B\geq g$ and whose very general fiber has no nontrivial $\Q$-factor.  Let $s:B\to X$ be a non-torsion section of $f$ which is of maximal variation.  Then the Betti map $\beta:\tilde B\to H_1(X_b,\R)$ associated to $s$ is generically submersive.
\end{prop}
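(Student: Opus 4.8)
The plan is to deduce submersivity of $\beta$ from a dimension estimate controlled by the Ax--Schanuel theorem for the universal family $\mathcal X\to S$. Fix a very general basepoint and pass to $\tilde B$, where $R_1f_*\Z$ trivializes; let $\tau:\tilde B\to \mathfrak D$ be the holomorphic lift of the period map to the domain $\mathfrak D$ uniformizing $S$, let $u:\mathfrak X\to\mathcal X$ be the uniformization of the universal abelian variety (so $\mathfrak X\to\mathfrak D$ has fibers $\C^g$), and let $\tilde s:\tilde B\to\C^g$ be the lift of the section. The Betti coordinates $\beta=(a,b):\tilde B\to\R^{2g}$ are then characterized by $\tilde s=\tau a+b$. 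The first point is that each fiber $\beta^{-1}(a_0,b_0)$ is the zero locus of $\tilde s-\tau a_0-b_0$, which is holomorphic on $\tilde B$; hence the fibers are complex analytic, the generic real rank of $d\beta$ is even, say $2\rho$, and the leaves of the associated holomorphic foliation have complex dimension $\dim B-\rho$. Since the target has real dimension $2g$ and $\dim B\ge g$, we have $\rho\le g$, and $\beta$ is generically submersive if and only if $\rho=g$. It therefore suffices to prove $\rho=g$.

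Now replace $S$ by the smallest weakly special subvariety $S_0$ containing $\phi(B)$, with domain $\mathfrak D_0$, and write $\mathcal X_0=\mathcal X|_{S_0}$, $\mathfrak X_0=\mathfrak X|_{\mathfrak D_0}$, so that $\dim\mathcal X_0=\dim S_0+g$. On a leaf $L$ the coordinates $\beta$ are constant, equal to some $(a_0,b_0)$, so the lift $(\tau,\tilde s)(L)\subset\mathfrak X_0$ lies in the graph $W=\{(\tau,v):v=\tau a_0+b_0\}$, which (as $a_0,b_0$ are fixed) is a complex algebraic subvariety of $\mathfrak X_0$ of dimension $\dim S_0$. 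Apply Ax--Schanuel to $W\subset\mathfrak X_0$ and $Y:=\mu(B)\subset\mathcal X_0$: an irreducible component $A$ of $W\cap u^{-1}(Y)$ containing the lift of $L$ has $\dim A\ge\dim L=\dim B-\rho$. If this intersection is typical, then
\[\dim A\le \dim W+\dim Y-\dim\mathcal X_0=\dim S_0+\dim B-(\dim S_0+g)=\dim B-g,\]
using $\dim Y=\dim B$ by maximal variation; this forces $\rho\ge g$, hence $\rho=g$, and we are done.

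It remains to exclude the atypical case, in which Ax--Schanuel instead asserts that $\mu(L)=u(A)$ lies in a proper weakly special subvariety $T\subsetneq\mathcal X_0$; I expect this to be the main obstacle. The three hypotheses are arranged precisely to rule this out via the structure of $T$: its projection $\pi(T)\subseteq S_0$ is weakly special, and over $\pi(T)$ the variety $T$ is a torsion translate of an abelian subscheme $\mathcal B$. Minimality of $S_0$ forces $\pi(T)=S_0$; the assumption that the very general fiber has no nontrivial $\Q$-factor forces $\mathcal B=0$ or $\mathcal B=\mathcal X_0$ (the latter making $T$ non-proper); and $\mathcal B=0$ would exhibit $s$ as torsion, contrary to hypothesis. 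The genuinely delicate point is that Ax--Schanuel in its raw form constrains only the proper subvariety $\mu(L)$ and not $Y$ itself, so to conclude one must show that such atypical leaves cannot sweep out $Y$ through a very general point. This is where the argument ceases to be formal: it requires the finiteness of maximal (weakly optimal) atypical subvarieties furnished by Ax--Schanuel, together with the non-degeneracy of $Y$ coming from the three hypotheses, in order to pass from ``$\mu(L)$ is degenerate through a general point'' to a genuine contradiction. Once this is in place, every intersection above is typical, $\rho=g$, and $\beta$ is generically submersive.
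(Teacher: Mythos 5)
The paper does not actually prove this statement: it is imported wholesale as \cite[Theorem 9.1]{GaoAB}, with only the remark that it is ``a simple application of the Ax--Schanuel theorem for universal families of abelian varieties.'' So there is no in-paper proof to compare against; what I can assess is whether your sketch would stand on its own as a proof of Gao's theorem. The skeleton you set up is the right one and matches Gao's strategy: the fibers of the Betti map are complex analytic, so the generic rank is $2\rho$ with leaves of complex dimension $\dim B-\rho$; a leaf lifts into the algebraic ``graph'' $W$ of dimension $\dim S_0$ in the uniformization of the universal abelian variety over the smallest weakly special $S_0$ containing the image of the period map; and the typical-intersection inequality forces $\rho\ge g$. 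The dimension bookkeeping here is correct.

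However, as you yourself flag, the proposal is not a proof, because the entire content of the theorem sits in the step you leave open. Ax--Schanuel constrains each leaf individually: it places $\mu(L)$ inside some proper weakly special subvariety $T_L$ \emph{depending on} $L$. Your structural analysis of ``the'' weakly special $T$ (torsion translate of an abelian subscheme over a weakly special base --- and note you have also suppressed the possible constant translate coming from an isotrivial factor of $\mathcal X_0$ over $S_0$) cannot be run until one has produced a \emph{single} such $T$, or a finite collection, containing the leaves through a dense set of points and hence dominating $Y=\mu(B)$. That requires the finiteness of the family of weakly optimal subvarieties (the ``Ax--Schanuel package''), and it is also what your subsidiary claim that ``$\mathcal B=0$ would exhibit $s$ as torsion'' secretly relies on: a priori each leaf could lie in a \emph{different} torsion translate of the zero section, which does not make $s$ torsion. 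Since this is precisely where, in your own words, the argument ``ceases to be formal,'' the proposal has a genuine gap at its decisive step; for the purposes of this paper the correct move is the one the author makes, namely to cite Gao's theorem rather than reprove it.
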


\begin{cor}\label{cor}
Let $X$ be a primitive symplectic variety and $f:X\to B$ a Lagrangian fibration.  Let $L$ be a line bundle whose restriction to the smooth fibers is topologically trivial.  Then the set of points $b\in B^\opart(\C)$ for which $L|_{X_b}$ is torsion is analytically dense in $B$.  
\end{cor}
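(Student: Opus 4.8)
The plan is to reduce to Gao's Proposition \ref{AS} via the Betti map, using Theorem \ref{main} to control the variation. Restriction of $L$ to the smooth fibers defines a section $s\colon B^\opart\to\hat X^\opart$ of the dual Lagrangian fibration $\hat f\colon\hat X^\opart\to B^\opart$, with $s(b)=L|_{X_b}\in\Pic^0(X_b)$, so the locus in question is $\{b : s(b)\ \text{torsion}\}=\beta^{-1}(H_1(\hat X_b,\Q))$, where $\beta\colon\tilde B\to H_1(\hat X_b,\R)$ is the Betti map of $s$. The Beauville--Bogomolov--Fujiki form polarizes the fibers, making $\hat f$ a projective family of $g$-dimensional abelian varieties (with its zero section) over the algebraic space $B^\opart$, which we may take to be a scheme after an \'etale cover; since $\dim B=g$ the numerical hypotheses of Proposition \ref{AS} hold, and the no-nontrivial-$\Q$-factor hypothesis follows from Lemma \ref{irred}, as a constant isogeny factor of the very general fiber would cut out a proper sub-variation of $V_\R$. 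Finally $\dim_\R\tilde B=2g=\dim_\R H_1(\hat X_b,\R)$, so once $\beta$ is known to be generically submersive it is a local diffeomorphism on a dense open set, whence $\beta^{-1}(H_1(\hat X_b,\Q))$ is analytically dense in $\tilde B$ and its image is analytically dense in $B$. Everything thus reduces to showing that $\beta$ is generically submersive (if $s$ is torsion the locus is all of $B$ and there is nothing to prove, so I assume $s$ non-torsion).

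By Theorem \ref{main}, $f$ is either of maximal variation or isotrivial, and I would treat these separately. In the first case the period map of $f$, and hence that of $\hat f$, is generically finite; as the lift $B^\opart\to\hat{\mathcal X}$ to the universal family determined by $s$ covers this generically finite period map, it is itself generically finite, i.e. $s$ is of maximal variation. Proposition \ref{AS} then applies verbatim and gives that $\beta$ is generically submersive, finishing this case.

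The isotrivial case is the one in which Proposition \ref{AS} does not apply, since a constant family is its own $\Q$-factor, and I expect it to be the crux. Here the plan is to pass to a finite Galois \'etale cover $B'\to B^\opart$ with group $G$ over which the family is constant, $X'\cong A\times B'$, so that $s$ becomes a $G$-equivariant holomorphic map $B'\to\hat A$. Such a map factors through $\Alb(B')$ as a homomorphism followed by a translation, and the smallest abelian subvariety $A'\subseteq\hat A$ a translate of which contains $s(B')$ is $G$-stable; its rational tangent space is a $G$-subrepresentation of $H_1(A,\Q)$, which is irreducible by Lemma \ref{irred}, so $A'$ is $0$ or all of $\hat A$. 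If $A'=0$ then $s$ is constant with value in $\hat A^G$, and the $G$-invariants of the nontrivial irreducible variation $V_\R$ vanish, forcing $\hat A^G$ to be finite; hence $s$ is torsion and the locus is everything. If $A'=\hat A$ and $s$ is moreover dominant, then $s$ is a generically finite, hence generically open, map of $g$-dimensional varieties, so the preimage $s^{-1}(\hat A_{\mathrm{tors}})$ of the dense torsion subgroup is dense.

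The remaining gap, which I expect to be the main obstacle, is to rule out a section $s$ whose image is a \emph{proper} subvariety of $\hat A$ generating it: for such an image Manin--Mumford would leave only finitely many torsion points and the torsion locus would fail to be dense. Completing Voisin's partial result thus amounts to showing that in the isotrivial case a section coming from a line bundle on the primitive symplectic variety $X$ is either torsion or dominant onto $\hat A$---I would attempt this by bounding $\Alb(B')$ and exploiting the $G$-equivariance of $s$ together with the irreducibility of $V_\R$.
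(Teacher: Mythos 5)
Your overall strategy---reduce to Gao's Proposition \ref{AS} via the Betti map, using Theorem \ref{main} to split into cases---is the paper's strategy, but there are two genuine gaps. The first is in your verification of the hypotheses of Proposition \ref{AS} in the maximal-variation case. You claim the no-nontrivial-$\Q$-factor hypothesis follows from Lemma \ref{irred} because ``a constant isogeny factor of the very general fiber would cut out a proper sub-variation of $V_\R$.'' This conflates two different conditions: the hypothesis of Proposition \ref{AS} excludes \emph{any} nontrivial isogeny factor of the very general fiber, not just constant ones, and irreducibility of $V_\R$ as a variation over $B^\opart$ is perfectly compatible with the very general fiber being isogenous to a product $A_1\times\cdots\times A_d$ whose factors are permuted transitively by the monodromy (a factor only cuts out a sub-variation after a finite base change). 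So Proposition \ref{AS} need not apply to $\Pic^0(X^\opart/B^\opart)\to B^\opart$ directly, and your argument breaks precisely where the paper does real work: it passes to a finite Galois cover $B^{\opart\prime}\to B^\opart$ over which the $\Q$-factors are defined, uses Lemma \ref{irred} to see the Galois group permutes them transitively --- whence all factors have the same fiber dimension $g'$ and each has period image of dimension at least $g'$, since the total period image has dimension $g=dg'$ --- observes that the pulled-back section is Galois-invariant so density may be checked on a single factor $Y^\opart\to B^{\opart\prime}$, and only then applies Proposition \ref{AS} to that factor.

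The second gap is the isotrivial case, which you correctly identify as unreachable by Proposition \ref{AS} but then leave open: your Albanese/equivariance argument explicitly fails to exclude a section whose image is a proper subvariety generating $\hat A$, where Manin--Mumford would kill density. In the paper this case costs nothing: Voisin \cite[Theorem 1.3]{voisin} already proves the corollary whenever $f$ is isotrivial, with no restriction on dimension (her dimension bound $\dim X\le 8$ pertains only to the maximal-variation case), so Theorem \ref{main} reduces the entire statement to the maximal-variation case. As written, your proposal is therefore incomplete in both branches of the dichotomy, even though the Betti-map framework and the use of Theorem \ref{main} are the right ingredients.
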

Corollary \ref{cor} was proven by Voisin \cite[Theorem 1.3]{voisin} assuming either $f$ is of maximal variation and $\dim X\leq 8$ or isotrivial with no restriction on the dimension.  Our use of Proposition \ref{AS} replaces the results of Andr\'e--Corvaja--Zannier \cite{ACZ} in \cite{voisin}.

\begin{proof}[Proof of Corollary \ref{cor}]  By Voisin's result and Theorem \ref{main} we may assume $f$ is of maximal variation.  Note that Voisin's proof works equally well in the singular case; we leave the details to the reader.  Consider the family of abelian varieties $h:\Pic^0(X^\opart/B^\opart)\to B^\opart$ and the section $s:b\mapsto L|_{X_b}$.  Let $\nu:B^{\opart\prime}\to B^\opart$ be a Galois finite base-change for which the $\Q$-factors of the very general fiber of $h$ are defined over $B^{\opart\prime}$.  As the Galois group of $\nu$ acts transitively on the factors by Lemma \ref{irred}, the $d$ factors all have the same dimension $g'$, and the image of the period map of each factor must have dimension $\geq g'$, or else the image of the period map of $f$ would have dimension smaller than $dg'=\dim(X^\opart/B^\opart)=\dim(B^\opart)$.  The base-change of the section $s$ is also Galois invariant, so it suffices to prove the density statement for its projection to a single factor $Y^\opart\to B^{\opart\prime}$.  Applying Proposition \ref{AS}, the Betti map $\beta:\tilde B\to H_1(Y_b,\R)$ is submersive, so $\beta^{-1}(H_1(Y_b,\Q))$ is analytically dense in $\tilde B$ as claimed. 
\end{proof}

\def\CH{\operatorname{CH}}

Corollary \ref{cor} has an interesting interpretation in terms of the Beauville conjecture for irreducible hyperk\"ahler manifolds $X$, see the discussion in \cite[\S 1.2]{voisin}.  There it is also shown how corollaries \ref{DuttaHuy} and \ref{cor} can be used to construct constant cycle curves on K3 surfaces.

\subsection*{Acknowledgements}The author would like to thank C. Voisin for conversations related to Theorem \ref{main} and Corollary \ref{cor}, D. Maulik for bringing the paper \cite{voisin} to his attention, and C. Lehn for conversations related to Lagrangian fibrations in the singular case.  This work was partially supported by NSF grant DMS-2131688.

\bibliography{biblio.voisin}
\bibliographystyle{plain}

\end{document}